\newtheorem{Le}{Lemma}[section]
\newtheorem{Def}{Definition}[section]
\newtheorem{St}[Le]{Proposition}
\newtheorem{Th}{Theorem}[section]
\newtheorem{Rem}[Le]{Remark}
\numberwithin{equation}{section}
\newcommand{\R}{\mathbb{R}}
\newcommand{\Co}{\mathbb{C}}
\newcommand{\N}{\mathbb{N}}
\newcommand{\eps}{\varepsilon}
\newcommand{\eq}[1]{\begin{equation}{#1}\end{equation}}
\newcommand{\mlt}[1]{\begin{multline}{#1}\end{multline}}
\newcommand{\alg}[1]{\begin{align}{#1}\end{align}}
\newcommand{\set}[2]{\{{#1}\mid{#2}\}}
\newcommand{\Set}[2]{\Big\{{#1}\,\Big|\;{#2}\Big\}}
\newcommand{\scalprod}[2]{\langle{#1},{#2}\rangle}
\newcommand{\fdot}{\,\cdot\,}
\newcommand{\Leqref}[1]{\stackrel{\scriptscriptstyle{\eqref{#1}}}{\leq}}
\newcommand{\Lref}[1]{\stackrel{#1}{\leq}}
\newcommand{\Eeqref}[1]{\stackrel{\scriptscriptstyle\eqref{#1}}{=}}
\newcommand{\Gref}[1]{\stackrel{#1}{\geq}}
\DeclareMathOperator{\BV}{BV}
\DeclareMathOperator{\supp}{supp}
\DeclareMathOperator{\spec}{spec}
\DeclareMathOperator{\Heat}{H}
\newcommand{\Me}{\boldsymbol{\mathrm{M}}}
\newcommand{\Sw}{\mathcal{S}}
\newcommand{\W}{\boldsymbol{\mathrm{W}}}
\newcommand{\dH}{\dim_{\mathrm{H}}}
\newcommand{\ldH}{\underline{\dim}_{\mathrm{H}}}
\DeclareMathOperator{\Tan}{Tan}
\newcommand{\Ext}{\mathcal{E}}
\title{Dimension estimates for vectorial measures with restricted spectrum}
\author{Dmitriy Stolyarov\thanks{Supported by the Russian Science Foundation grant N 19-71-30002.}}
\begin{document}
\maketitle
\begin{abstract}
We link the problem of estimating the lower Hausdorff dimension of PDE or Fourier constrained measures with Harnack's inequalities for the heat equation. Our approach provides new estimates in the case of Fourier constraints.
\end{abstract}

\section{Introduction}\label{S1}
Let~$l$ and~$d$ be natural numbers. The space~$\Me(\R^d,\Co^l)$ consists of all charges ($\Co^l$-valued sigma-additive Borel set functions) of finite variation. Here and in what follows we distinguish measures that are always scalar and non-negative from charges, which may be either~$\R$, or~$\Co$, or~$\Co^l$ valued. We define the norm in that space by the formula
\eq{
\|\mu\|_{\Me(\R^d,\Co^l)} = \sup \Set{\int\limits_{\R^d} f\,d\mu}{\|f\|_{C_0(\R^d,\Co^l)} \leq 1}.
}
Here~$C_0(\R^d,\Co^l)$ is the space of continuous functions that tend to zero at infinity, equipped with the standard sup-norm.  Alternatively, one may use the classical definition of the total variation via partitions.

Let~$k \leq l$ be natural numbers. Let~$\Omega\colon S^{d-1}\to G(l,k)$ be a smooth mapping. The notation~$S^{d-1}$ and~$G(l,k)$ means the unit sphere in~$\R^d$ and the (complex) Grassmannian, i.e. the set of all (complex) linear~$k$-dimensional subspaces of~$\Co^l$. The map~$\Omega$ gives rise to a generalization of the~$\BV$-space
\eq{\label{DefOfBV}
\BV^\Omega = \Set{\mu\in \Me(\R^d,\Co^l)}{\forall \xi \in \R^d \setminus \{0\} \quad \hat{\mu}(\xi) \in \Omega\big(\frac{\xi}{|\xi|}\big)}.
}
This space inherits the norm from the space~$\Me$. Here and in what follows we use the standard Harmonic Analysis normalization of the Fourier transform
\eq{
\hat{f}(\xi) =\mathcal{F}[f](\xi) = \int\limits_{\R^d} f(x)e^{-2\pi i\scalprod{\xi}{x}}\,dx;\quad \hat{\mu}(\xi) = \int\limits_{\R^d}e^{-2\pi i \scalprod{\xi}{x}}d\mu(x).
} 

The basic example is~$\Omega(\zeta) = \Co\zeta$, i.e.~$l = d$,~$k=1$, and~$\Omega$ maps a point~$\zeta\in S^{d-1}$ to the line it spans. In this case,~$\BV^\Omega$ becomes the classical~$\BV$ space (more precisely, it is the space of gradients of~$\BV$ functions). Other examples may be found in~\cite{Stolyarov2020HLS}.

Since we are working with the Fourier transform, we will need the Schwartz class. We denote it by~$\Sw(\mathbb{R}^d)$ or~$\Sw(\R^d,\Co^l)$ depending on whether we consider scalar or vector valued functions. By the spectrum of a function or of a ditribution we mean the support of its Fourier transform.

\begin{Rem}
The space~$\BV^\Omega$ is closed in~$\Me(\R^d,\Co^l)$. This space is also translation and dilation invariant. 
\end{Rem}
Measures in~$\BV^\Omega$ cannot be excessively singular. We will be measuring singularity on a rough scale. Recall the notion of the lower Hausdorff dimension of a charge:
\eq{
\ldH \mu = \set{\inf \alpha}{\exists \ \hbox{a Borel set}\ F \hbox{ such that } \dH F \leq \alpha, \ \mu(F) \ne 0}.
}
The question going back to~\cite{RoginskayaWojciechowski2006} is: what is the smallest possible lower Hausdorff dimension of~$\mu \in \BV^\Omega\setminus \{0\}$? The sets~$\Omega^{-1}(a)$, where~$a \in S^{d-1}$, play an important role in the subject:
\eq{
\Omega^{-1}(a) = \set{\zeta \in S^{d-1}}{a\in \Omega(\zeta)}.
}
Note that these sets are closed. We are ready to formulate our main result.
\begin{Th}\label{DimensionalEstimate}
Let~$k \in [0..d]$ be such that for any~$a\in S^{d-1}$ there exists a~$k$-dimensional linear plane~$L_a$ in~$\R^d$ such that
\eq{\label{Antisymmetry}
\Omega^{-1}(a)\cap (-\Omega^{-1}(a))\cap L_a = \varnothing.
} 
Then\textup,~$\ldH \mu \geq k$ for any~$\mu \in \BV^\Omega \setminus \{0\}$.
\end{Th}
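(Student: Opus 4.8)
The plan is to argue by contradiction: assuming $\ldH\mu<k$ for some $\mu\in\BV^\Omega\setminus\{0\}$, I would produce a Borel set $F$ with $\dH F<k$ and $|\mu|(F)>0$, and then pass to a blow-up of $\mu$ at a carefully chosen point of $F$ to reach an impossibility. Fix $\beta\in(\dH F,k)$; then $\mathcal H^\beta(F)=0$, so by the standard density comparison $\Theta^{*\beta}(|\mu|,x):=\limsup_{r\to0}|\mu|(B_r(x))/r^\beta=\infty$ for $|\mu|$-a.e.\ $x\in F$. Choose $x_0\in F$ at which this holds, at which the polar $a:=\frac{d\mu}{d|\mu|}(x_0)$ exists (a Lebesgue point of the polar, $|a|=1$; the hypothesis applied to $a$ supplies a $k$-plane $L_a\subseteq\R^d$ with $\Omega^{-1}(a)\cap(-\Omega^{-1}(a))\cap L_a=\varnothing$), and at which $\Tan(\mu,x_0)=\Tan(\mu\,\lfloor\,F,x_0)$.

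Blowing up $\mu$ at $x_0$ along a suitable sequence of scales and normalizations I would obtain a nonzero (homogeneous, generally infinite–mass) tangent measure $\nu$ with three properties: (a) the polar of $\nu$ is the constant vector $a$, that is $\nu=a\,|\nu|$; (b) $\nu$ satisfies the Fourier constraint away from the origin, $\hat\nu(\xi)\in\Omega(\xi/|\xi|)$ for $\xi\ne0$, understood in the distributional sense (it is locally a genuine $\Co^l$-valued measure there); (c) $\Theta^{*k}(|\nu|,0)=+\infty$, inherited from $\Theta^{*\beta}(|\mu|,x_0)=+\infty$ together with $\beta<k$. Here the heat semigroup is a convenient technical device: one works with the smooth surrogates $(|\mu|*\Heat_t)(x_0)$ in place of $|\mu|(B_r(x_0))$, whose ratios at comparable scales are controlled by Harnack's inequality for the heat equation, which is what makes the selection of blow-up scales and normalization tractable and lets the density information survive the limit.

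Now the spectral core. By (a), $\hat\nu=a\,\widehat{|\nu|}$; since $\Omega(\xi/|\xi|)$ is a complex subspace depending smoothly on $\xi\in\R^d\setminus\{0\}$, dividing by a nonvanishing smooth linear functional that annihilates $\Omega(\xi/|\xi|)$ locally turns (b) into $\supp\widehat{|\nu|}\subseteq\{0\}\cup\{t\zeta:t>0,\ \zeta\in\Omega^{-1}(a)\}$. Because $|\nu|\ge0$ is real, $\widehat{|\nu|}$ is Hermitian, so its support is symmetric under $\xi\mapsto-\xi$; intersecting with $L_a$ and using \eqref{Antisymmetry} (which gives $\operatorname{cone}(\Omega^{-1}(a))\cap(-\operatorname{cone}(\Omega^{-1}(a)))\cap L_a=\{0\}$) yields $\supp\widehat{|\nu|}\cap L_a=\{0\}$. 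Passing to the orthogonal projection $P\colon\R^d\to L_a$, this says $\widehat{P_*|\nu|}$ is supported at the origin, so the nonnegative measure $P_*|\nu|$ equals $q\,d\mathcal L^k$ for a nonnegative polynomial $q$ on $L_a\cong\R^k$; in particular $P_*|\nu|$ is absolutely continuous with locally bounded density, and it is nonzero since $|\nu|\ne0$ and $P$ preserves total mass. But $P_*|\nu|(\overline B_\rho^{\,k})\ge|\nu|(\overline B_\rho^{\,d})$, so property (c) forces $P_*|\nu|(\overline B_\rho^{\,k})/\rho^k\to\infty$ as $\rho\to0$, whereas $q\,d\mathcal L^k(\overline B_\rho^{\,k})/\rho^k\to q(0)\,\omega_k<\infty$. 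This contradiction proves $\ldH\mu\ge k$; the case $k=0$ is trivial.

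The step I expect to be the real obstacle is making the blow-up honest, i.e.\ producing a tangent measure that \emph{simultaneously} has constant polar, carries the distributional Fourier constraint, and genuinely records the lower-dimensionality of $F$ in the form (c). The last point fails for a general Radon measure at a point of infinite upper $\beta$-density (a logarithmically growing density profile produces a tangent measure with finite upper $\beta$-density), so one must really exploit $\beta<k$ and choose scales with care; I anticipate this is precisely where the heat-equation Harnack estimates are indispensable rather than merely convenient. A secondary technicality is the behaviour of $\widehat{|\nu|}$ near $\xi=0$ and the legitimacy of restricting this distribution to $L_a$ when $|\nu|$ has infinite mass; this should be handled by controlling the growth of $|\nu|$ at spatial infinity in the blow-up, so that $|\nu|$ is tempered and the support computation is valid.
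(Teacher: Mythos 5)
Your two-step architecture matches the paper's: first blow up at a well-chosen point to obtain a tangent measure with constant polar vector~$a$ and inherited low-dimensionality, then use the resulting one-sided spectral constraint $\supp\widehat{|\nu|}\subset\{0\}\cup\operatorname{cone}\bigl(\Omega^{-1}(a)\bigr)$ together with positivity and~\eqref{Antisymmetry} to force dimension~$\geq k$. The endgame, however, diverges and is where the gap lies. Your projection argument asserts that $P_*|\nu|$ (the pushforward of $|\nu|$ onto $L_a$) is a nonnegative measure whose Fourier transform is supported at the origin, hence a nonnegative polynomial density. But a tangent measure has infinite total mass in general, and the growth bound one can secure on the blow-up (of the form $|\nu|(B_R)\lesssim R^{\beta+2\eps}$, $R>1$) does \emph{not} prevent $|\nu|$ from having infinite mass in a cylinder $B^k_\rho\times\R^{d-k}$; so $P_*|\nu|$ need not be a locally finite measure at all. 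Likewise, even for a tempered $\widehat{|\nu|}$ the restriction to the $k$-plane $L_a$ is not automatically a well-defined distribution, and temperedness alone (which you invoke) does not fix either issue. The paper's Theorem~\ref{AnalyticTheorem} circumvents precisely this: it first multiplies $\nu$ by a positive Schwartz function with compact spectrum (producing a finite measure at the cost of slightly enlarging the spectrum near the origin), and then replaces the sharp projection by a Gaussian-weighted partial Fourier transform --- the measure $\nu_t$ with $\hat\nu_t(\xi_{k+1},\dots,\xi_d)=\int_{\R^k}\hat\nu(\xi)e^{-4\pi^2 t\sum_{j\leq k}\xi_j^2}\,d\xi_1\cdots d\xi_k$ --- which is always a finite positive measure. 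The elementary Harnack bound of Lemma~\ref{BCTArbitraryKernel} then yields $\Heat[\nu](0,t)\lesssim t^{(k-d)/2}$ directly, replacing your ``polynomial density'' step. In short: the heat kernel in the paper is the smoothed version of your projection, not (as you guessed) a device for choosing blow-up scales; the blow-up in Theorem~\ref{TangentMeasureTheorem} is instead a purely real-variable analysis of $f(t)=-\log\mu(B_{e^{-t}}(x))$ via a decrement-moment decomposition. Your identification of the blow-up step as delicate is correct, but the unproved $\Theta^{*k}(|\nu|,0)=\infty$ is handled in the paper by the weaker but sufficient $\ldH\nu(0)\leq\ldH\mu+\eps<k$, and the paper's Proposition~\ref{TangentMeasureLocal} is what supplies it. As written, your step from $\supp\widehat{|\nu|}\cap L_a=\{0\}$ to ``$P_*|\nu|$ is a polynomial'' does not go through without the regularization that the paper's heat-extension argument provides.
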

The case~$k=d$ in Theorem~\ref{DimensionalEstimate} follows from the celebrated Uchiyama theorem from~\cite{Uchiyama1982}; the latter theorem is a generalization of the classical Fefferman--Stein theorem. Namely, the Uchiyama theorem says that if
\eq{\label{ClassicalAntisymmetry}
\forall \zeta \in S^{d-1}\qquad \Omega(\zeta)\cap \Omega(-\zeta) = \{0\},
} 
then~$\BV^\Omega \subset \mathcal{H}_1(\R^d,\Co^l)$, where~$\mathcal{H}_1$ stands for the real Hardy class. The functions~$\Omega$ satisfying~\eqref{ClassicalAntisymmetry} (i.e.~\eqref{Antisymmetry} with~$k=d$) are called antisymmetric; this condition appeared in~\cite{Janson1977} for the first time. The case~$k=1$ in Theorem~\ref{DimensionalEstimate} was settled in~\cite{RoginskayaWojciechowski2006} under much weaker regularity assumptions on~$\Omega$. The case where~$\Omega$ is a rational function was considered in~\cite{APHF2019} (in the case where~$\Omega$ is a symmetric function,~\eqref{Antisymmetry} is equivalent to the condition of~\cite{APHF2019} that the~$k$-wave cone
\eq{
\Lambda_k^\Omega = \bigcap_{L\in G(d,k)}\bigcup_{\zeta \in S^{d-1}\cap L}\Omega(\zeta)
}
is empty; we are using the real Grassmannian in this formula). In fact, in~\cite{APHF2019} it was proved that~$\mu$ is absolutely continuous with respect to the corresponding Hausdorff measure~$\mathcal{H}^{k}$ alongside with more delicate results on rectifiability. See the paper~\cite{Arroyo-Rabasa2020} for an elementary approach to some of these questions. The paper~\cite{AyoushWojciechowski2017} contains partial results in the case of arbitrary~$\Omega$ and~$k=2$ (the assumptions in that paper are stronger than~\eqref{Antisymmetry}). The paper~\cite{ASW2018} suggests a version of the dimension problem in the setting of discrete time uniform martingales; the Fourier constraints we are studying here are replaced with linear constraints on martingale differences. In that setting,~\cite{ASW2018} contains a complete solution to the dimension problem.

Our approach seemingly differs from the preceding ones. It consists of two main steps. To formulate the first step, we need the notion of the local lower Hausdorff dimension of a charge:
\eq{
\ldH \mu (x) = \varliminf_{r\to 0} \frac{\log |\mu|(B_r(x))}{\log r}, \quad x\in \supp\mu,
}
here~$B_r(x)$ stands for the closed Euclidean ball of radius~$r$ centered at~$x$. By a tempered measure or tempered charge we mean a locally finite measure or locally finite charge~$\mu$ for which there exists~$N\in\N$ such that
\eq{
|\mu|(B_R(0)) \lesssim R^N,\quad R \geq 1.
}
In other words, a tempered measure is a measure that is a Schwartz distribution. The notation~$A\lesssim B$ means~$A\leq CB$ and the constant~$C$ is uniform with respect to a certain parameter that is either clear from the context or specified somewhere nearby. For example, in the inequality above the constant~$C$ should be independent of~$R$ (however, as usual, it may depend on~$N$). 

We will be also using the standard notation for dilation of measures and tangent cones, see~\cite{Preiss1987}. By the classical convergence of locally finite measures and charges in~$\R^d$ we mean weak-* convergence on compact sets (as in~\cite{Preiss1987}). There is a natural tensor product structure on the space~$\Me(\R^d,\Co^l)$; the charge~$a\otimes \nu$, where~$\nu$ is a scalar charge and~$a\in \Co^l$, is defined by the formula
\eq{
a\otimes \nu(B) = \nu(B) a,\quad B\ \hbox{is a Borel set}.
}
\begin{Th}\label{TangentMeasureTheorem}\footnote{A theorem with the same assumptions and a stronger conclusion~$\ldH \nu \leq \ldH \mu + \eps$ was proved by Fedor Nazarov some time ago. Unfortunately, the proof had not been written down and got lost! I proved the weaker statement presented here independently and believe that my proof is different. Rami Ayoush and Michal Wojciechowski have recently reconstructed Nazarov's proof, their work will appear elsewere.}
For any locally finite~$\Co^l$-valued charge~$\mu$ on~$\R^d$ and any~$\eps > 0$ there exists~$x\in\supp\mu$\textup,~$a\in \Co^l \setminus \{0\}$\textup, and a non-zero measure~$\nu$ such that
\begin{enumerate}[1\textup{)}]
\item $a\otimes \nu \in \Tan[\mu,x]$\textup,
\item $\ldH \nu(0) \leq \ldH \mu + \eps$\textup,
\item $\nu \in \Sw'(\R^d)$\textup,
\item furthermore\textup, if~$\mu \in \Sw'(\R^d,\Co^l)$\textup, then $a\otimes \nu$ is tangent to~$\mu$ in the Schwartz sense\textup, i.e. 
\eq{\label{SchwartzTangent}
c_jT_{x,r_j}[\mu] \stackrel{\Sw'(\R^d,\Co^l)}{\longrightarrow}a\otimes \nu
}
for some sequences~$c_j > 0$ and~$r_j \searrow 0$.
\end{enumerate}
\end{Th}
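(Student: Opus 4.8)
The plan is to construct the tangent measure by a compactness argument applied to a sequence of dilations chosen to witness the lower Hausdorff dimension of $\mu$. First I would reduce to a single point: by definition of $\ldH\mu$ there is a Borel set $F$ with $\dH F = \ldH\mu =: \alpha$ and $|\mu|(F) \ne 0$, and a standard density argument (e.g.\ a Besicovitch-type covering / Frostman-type estimate on $F$) produces a point $x \in \supp\mu \cap F$ at which $\ldH|\mu|(x) \le \alpha + \eps$; that is, along some sequence $r_j \searrow 0$ one has $|\mu|(B_{r_j}(x)) \ge r_j^{\alpha+\eps}$. This choice of $r_j$ is what will eventually force conclusion 2). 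The point is that at a ``bad'' point the mass in small balls decays no faster than $r^{\alpha+\eps}$, so after renormalizing the dilated measures $T_{x,r_j}[\mu]$ by the scalars $c_j := 1/|\mu|(B_{r_j}(x))$ we get measures whose total variation on the unit ball is exactly $1$.

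Next I would extract a subsequential weak-$*$ limit on compact sets. The sequence $c_j T_{x,r_j}[\mu]$ is, after passing to a subsequence, uniformly bounded in variation on each ball $B_R(0)$ — this needs a doubling-type bound which follows because the $r_j$ were chosen along the liminf, so ratios $|\mu|(B_{Rr_j}(x))/|\mu|(B_{r_j}(x))$ stay bounded along a further subsequence for each fixed $R$, and then a diagonal argument over $R \in \N$ gives a single subsequence working for all $R$. By the classical compactness of bounded sets of charges under weak-$*$ convergence on compacts, a subsequence converges to some $\Co^l$-valued charge $\sigma \in \Tan[\mu,x]$, and $\sigma \ne 0$ since it carries mass $\ge$ something positive on $B_1(0)$ (here one must be a little careful that mass does not escape to the boundary of $B_1$, which is handled by working with slightly larger radii or with the open ball and lower semicontinuity). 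To get $\ldH\sigma(0) \le \alpha + \eps$ one pushes the inequality $|\mu|(B_{\rho r_j}(x)) \le C_\rho |\mu|(B_{r_j}(x))$ to the limit; the scaling $\log/\log$ in the definition of the local dimension then yields the bound, using that $c_j|\mu|(B_{\rho r_j}(x)) = |\sigma|(B_\rho(0)) + o(1)$ up to boundary corrections.

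The remaining point is to pass from the vectorial tangent charge $\sigma$ to the rank-one form $a\otimes\nu$ with $\nu \ge 0$. Here I would use a blow-up of $\sigma$ at a point of its support: applying the same extraction once more to $\sigma$, or invoking the elementary fact that a tangent measure of a tangent measure is a tangent measure, one may first pass to a tangent charge of $\sigma$ at a generic point of $|\sigma|$ and use the Radon--Nikodym polar decomposition $\sigma = \theta\,|\sigma|$ with $|\theta(y)| \equiv 1$; at a Lebesgue point $y_0$ of $\theta$ with respect to $|\sigma|$ the dilations converge to $\theta(y_0)\otimes\tilde\nu$ where $\tilde\nu$ is a tangent measure of the scalar measure $|\sigma|$ — this is where rank one and non-negativity of $\nu$ are produced simultaneously. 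Taking $a = \theta(y_0)$ and $\nu = \tilde\nu$ gives 1); the dimension inequality 2) survives this second blow-up because tangent measures of $|\sigma|$ at a point do not increase the local lower dimension (again a $\log/\log$ comparison), and one checks $y_0$ can be chosen in the support so $\nu \ne 0$. For 3) and 4): if $\mu \in \Sw'$, the polynomial growth bound $|\mu|(B_R(0)) \lesssim R^N$ is dilation-covariant, so each renormalized dilate satisfies $c_j |\mu|(B_{R r_j}(x)) \lesssim c_j R^N$ with constants controlled along the chosen subsequence, and passing to the limit gives $|\sigma|(B_R(0)) \lesssim R^{N'}$, hence $\nu \in \Sw'$ and the convergence upgrades from weak-$*$-on-compacts to convergence in $\Sw'(\R^d,\Co^l)$ by a standard argument (testing against Schwartz functions, splitting into a compact part and a tail controlled by the growth estimate).

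The main obstacle I expect is the bookkeeping in the two successive blow-ups: one must ensure that the dimension inequality $\ldH\nu(0) \le \ldH\mu + \eps$ is not degraded when passing from $\mu$ to $\sigma$ and then from $\sigma$ to $a\otimes\nu$, and that non-triviality ($\nu \ne 0$, $a \ne 0$) is preserved at each stage — in particular that mass is not lost to infinity or to sphere boundaries in the weak-$*$ limits. The cleanest route is probably to do the rank-one reduction \emph{first}, at the level of $\mu$ itself: pick $x$ to be simultaneously a dimension-witnessing point (as above) and a Lebesgue point of the polar function $d\mu/d|\mu|$ with respect to $|\mu|$ — the set of such points still carries positive $|\mu|$-mass — so that a single blow-up at $x$ already produces $c_j T_{x,r_j}[\mu] \to a\otimes\nu$ with $a = (d\mu/d|\mu|)(x)$ and $\nu \ge 0$ a tangent measure of $|\mu|$ inheriting the dimension bound directly. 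That collapses the argument to one extraction and sidesteps most of the bookkeeping.
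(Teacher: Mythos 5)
Your rank-one reduction is exactly the paper's: pick $x$ to be simultaneously a dimension-witnessing point and a Lebesgue point of the polar function~$d\mu/d|\mu|$ (Besicovitch, equation~\eqref{BesDifImpr} in the paper), so that a single blow-up of~$|\mu|$ already tensors up to~$a\otimes\nu$ with~$a=(d\mu/d|\mu|)(x)$. The reduction to the scalar Proposition~\ref{TangentMeasureLocal}, and the upgrade to Schwartz-sense convergence via a polynomial growth bound, are also as in the paper. So the outer architecture is right.

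The gap is in the scalar core, and it is serious. You choose~$r_j\searrow 0$ with~$|\mu|(B_{r_j}(x))\ge r_j^{\alpha+\eps}$ and claim two things that do not follow. First, the doubling-type bound: from~$\ldH|\mu|(x)\approx\alpha$ you only know~$|\mu|(B_{Rr_j}(x))\le (Rr_j)^{\alpha-\eps'}$ for small~$Rr_j$, so~$|\mu|(B_{Rr_j}(x))/|\mu|(B_{r_j}(x))\le R^{\alpha-\eps'}\,r_j^{-2\eps'}$, which is allowed to blow up as~$j\to\infty$; ``pass to a further subsequence for each~$R$'' does not repair this, and a diagonal argument cannot manufacture boundedness that may simply fail for the~$r_j$ you picked. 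Second, and more fundamentally, the step ``push~$|\mu|(B_{\rho r_j}(x))\le C_\rho|\mu|(B_{r_j}(x))$ to the limit to get~$\ldH\nu(0)\le\alpha+\eps$'' has the inequality pointing the wrong way. An upper bound on~$\nu(B_\rho(0))$ for small~$\rho$ makes~$\log\nu(B_\rho(0))/\log\rho$ \emph{larger}, i.e.\ it can only give a \emph{lower} bound on~$\ldH\nu(0)$. What you actually need is a \emph{lower} bound~$\nu(B_\rho(0))\ge\rho^{\alpha+2\eps}$ along a sequence~$\rho\to 0$, and this does not come for free from the choice of~$r_j$: the information~$|\mu|(B_{r_j}(x))\ge r_j^{\alpha+\eps}$ lives at the unit scale of the blow-up, not at small scales of the limit. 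Resolving this tension --- choosing scales that are simultaneously good for compactness (no mass blow-up on~$B_R$, $R>1$) and good for small-scale density of the limit (mass lower bounds on~$B_\rho$, $\rho<1$) --- is precisely what the paper's single-variable analysis of~$f(t)=-\log\mu(B_{e^{-t}}(x))$, its tilt~$h$, the monotone envelope~$h^-$, and the selection of ``$m$-good decrement moments'' accomplishes. Your proposal omits this mechanism entirely, and without some substitute for it the scalar proposition, and hence the theorem, is not proved.
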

The reader may find statements in the spirit of Theorem~\ref{TangentMeasureTheorem} in~\cite{GuidoIsola2006}. The second step in the proof of Theorem~\ref{DimensionalEstimate} is somehow similar to the approach of~\cite{RoginskayaWojciechowski2006}, however, we will phrase it in terms of the simplest possible sharp Harnack's inequality (see Lemma~\ref{BCTArbitraryKernel} below). See in~\cite{Stolyarov2020HLS} how similar sharp Harnack's inequalities for the heat equation lead to sharp versions of the Sobolev embedding theorems for the spaces~$\BV^\Omega$ (so-called Bourgain--Brezis inequalities). We refer the reader to the papers,~\cite{BourgainBrezis2007}~\cite{BBM2004},~\cite{BousquetVanSchaftingen2014},~\cite{HernandezSpector2020},~\cite{LanzaniStein2005},~\cite{Mazya2010},~\cite{Raita2019}, and~\cite{VanSchaftingen2013} for Bourgain--Brezis inequalities; the list is far from being complete, see the survey~\cite{VanSchaftingen2014} as well. The author believes that sharp Harnack inequalities is a common reason both for many Bourgain--Brezis inequalities and dimensional estimates for Fourier constrained measures. The paper~\cite{ASW2018} confirms this belief in the discrete setting. The link between Sobolev embeddings and dimensional estimates had been emphasized already in~\cite{StolyarovWojciechowski2014}. In the classical setting of the first gradient, both follow from isoperimetric inequalities via the co-area formula, see the books~\cite{AFP2000} and~\cite{Mazya2011}.
\begin{Th}\label{AnalyticTheorem}
Let~$\nu$ be a tempered measure on~$\R^d$. Let~$L$ be a~$k$-dimensional plane in~$\R^d$\textup,~$k \in [0..d-1]$. If the set
\eq{\label{ProjectionOfSpectrum}
\set{\zeta \in S^{d-1}}{\exists \lambda > 0 \hbox{ such that }\lambda\zeta \in \spec \nu}
}
does not intersect some neighborhood of~$L\cap S^{d-1}$, then for any~$x\in \R^d$ we have~$\ldH\nu(x) \geq k$.
\end{Th}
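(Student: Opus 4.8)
The plan is to prove the equivalent statement that, after translating $x$ to the origin and choosing orthonormal coordinates $\R^d=\R^k_y\times\R^{d-k}_z$ with $L=\R^k_y\times\{0\}$, one has $|\nu|(B_r(0))\lesssim_\delta r^{k-\delta}$ for every $\delta>0$ and all small $r$; this is exactly $\ldH\nu(0)\ge k$. In these coordinates the hypothesis becomes: there is $\kappa<\infty$ with $\spec\nu\subset\Gamma_\kappa:=\set{(\eta,\tau)\in\R^k\times\R^{d-k}}{|\eta|\leq\kappa|\tau|}$. Two features of $\Gamma_\kappa$ will be used. First, it contains the $(d-k)$-dimensional subspace $\{\eta=0\}$ but no subspace of larger dimension; this is the geometric reason why $k$, and not something smaller, is the resulting exponent. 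Second, and quantitatively, the distance from $(\eta,0)$ to $\Gamma_\kappa$ is comparable to $|\eta|$, and more generally $\dist\big((\eta,\tau),\Gamma_\kappa\big)\gtrsim(|\eta|-\kappa|\tau|)_+$.

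The first real step exploits that $\Gamma_\kappa$ misses an entire conical neighborhood of $L$. Fix a non-negative $\varphi\in\Sw(\R^d)$ with $\varphi\geq\I_{B_1}$ and $\hat\varphi\in C_c^\infty(B_1)$, and put $\varphi_r=\varphi(\cdot/r)$, so $|\nu|(B_r(0))\leq\int\varphi_r\,d|\nu|=\|\varphi_r\nu\|_{\Me}$. The measure $\varphi_r\nu$ is finite (by temperedness of $\nu$) and its Fourier transform $\widehat{\varphi_r}\ast\hat\nu$ is supported in $\Gamma_\kappa+B_{1/r}$; since $(\eta,0)$ lies at distance $\gtrsim|\eta|$ from $\Gamma_\kappa$, the $L$-marginal $\pi_L\#(\varphi_r\nu)$ — whose Fourier transform is $(\widehat{\varphi_r}\ast\hat\nu)(\,\cdot\,,0)$ — has Fourier transform supported in $\{|\eta|\lesssim_\kappa 1/r\}$. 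Thus $\pi_L\#(\varphi_r\nu)$ is a finite measure on $\R^k$ band-limited at scale $1/r$, hence given by a density that extends to an entire function of exponential type $O(1/r)$, and this density is rapidly concentrated near $B_{Cr}$. This is the precise sense in which the spectral constraint forces $\nu$ to be ``smooth in the $L$-directions at the ambient scale,'' uniformly. The same computation applied to the $L$-marginal of the flow $e^{t\Delta_z}\nu$ (heat semigroup in the $z$-variables only) shows that this flow is smooth at scale $\sqrt t$ in \emph{all} variables: on $\spec\nu$ one has $|\tau|\geq|\xi|/\sqrt{1+\kappa^2}$, so $e^{-t|\tau|^2}$ decays like the full heat multiplier $e^{-t|\xi|^2}$.

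The second step converts this smoothness into the desired decay of $|\nu|(B_r(0))$; this is where the sharp Harnack inequality (Lemma~\ref{BCTArbitraryKernel}) enters, playing the role that the isoperimetric/Harnack input plays in the classical $\BV$ estimates. Writing $u(\,\cdot\,,t)=e^{t\Delta_z}\nu$, one reconstructs $\nu=u(\,\cdot\,,t_0)+\int_0^{t_0}(-\Delta_z)u(\,\cdot\,,t)\,dt$ and bounds $|\nu|(B_r(0))$ by $r^d$ times $\sup_{B_r(0)}|u(\,\cdot\,,r^2)|$ plus $r^d\int_0^{r^2}\sup_{B_r(0)}|\Delta_z u(\,\cdot\,,t)|\,dt$. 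The sharp Harnack inequality, combined with the scale-$\sqrt t$ smoothness in the $y$-variables from the first step, is meant to estimate these suprema by $\sup_{x'\in B_2(0)}|\nu|(B_{C\sqrt t}(x'))$ divided by $t^{(d-k)/2}$ rather than by the trivial $t^{d/2}$. Feeding this into the reconstruction formula yields a self-improving inequality for $M(r):=\sup_{x'\in B_2(0)}|\nu|(B_r(x'))$ whose only non-trivial input is the a priori bound $M(1)\lesssim1$ coming from temperedness; iterating it downward in $r$ gives $M(r)\lesssim_\delta r^{k-\delta}$ and hence the theorem. The main obstacle is precisely in making the previous sentence honest: the total variation $|\nu|$ is insensitive to the oscillatory cancellation in the $z$-variables that the constraint provides, so the gain of a factor $t^{k/2}$ over the naive bound cannot be extracted from a crude triangle-inequality estimate on $\Delta_z u$. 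It must instead be read off from the band-limited structure of the $L$-marginals produced in the first step and packaged through the sharp Harnack inequality, so that the recursion for $M(r)$ genuinely closes instead of being circular.
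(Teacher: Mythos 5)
You identify the critical gap yourself in the final paragraph: the reconstruction $\nu = u(\cdot,t_0)+\int_0^{t_0}(-\Delta_z)u\,dt$ cannot be estimated in total-variation norm, because $|\nu|$ destroys exactly the oscillatory cancellation in the $z$-variables that $\Delta_z u$ would need, so your recursion for $M(r)$ does not close. That gap is real; the route through the transverse heat flow $e^{t\Delta_z}$ followed by an attempt to invert the smoothing is the wrong direction.

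The idea you are missing is to smooth \emph{along} $L$ rather than transverse to it, and never to invert. Since $\nu(B_r(0))\lesssim r^d\,\Heat[\nu](0,r^2)$ for $r\leq 1$, it suffices to show $\Heat[\nu](0,t)\lesssim t^{(k-d)/2}$. First multiply $\nu$ once and for all by a fixed positive Schwartz function with compact Fourier support; this enlarges the spectrum only by a bounded set, makes $\nu$ rapidly decaying and $\hat\nu$ smooth, and does not change $\ldH\nu(0)$. Then define, writing $\xi=(\eta,\tau)\in\R^k\times\R^{d-k}$,
\eq{
\hat\nu_t(\tau) = \int_{\R^k}\hat\nu(\eta,\tau)\,e^{-4\pi^2 t|\eta|^2}\,d\eta,
}
which converges because the cone constraint bounds the $\eta$-fiber of $\spec\nu$ at each fixed $\tau$. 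Physically, $\nu_t$ is the restriction of $e^{t\Delta_y}\nu$ to the transverse slice $\{y=0\}$; since $\nu\geq0$ and the $y$-heat kernel is non-negative, $\nu_t$ is a non-negative distribution, hence a tempered measure on $\R^{d-k}$, and finite because $\hat\nu_t$ is continuous at $0$. By construction $\Heat[\nu](0,t)=\Heat[\nu_t](0,t)$ (the $(d-k)$-dimensional heat extension of $\nu_t$), so Lemma~\ref{BCTArbitraryKernel} applied in dimension $d-k$ gives $\Heat[\nu](0,t)\leq t^{(k-d)/2}\,\Heat[\nu_t](0,1)$, and $\Heat[\nu_t](0,1)$ is bounded uniformly in $t$ by the cone constraint once more (the $\eta$-fiber at height $\tau$ has volume $\lesssim|\tau|^k+O(1)$, absorbed by the Gaussian in $\tau$). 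This yields $\nu(B_r(0))\lesssim r^k$ directly — no $\delta$-loss, no recursion. Your band-limited $L$-marginal observation is in the right spirit, but the productive move is to produce a \emph{non-negative} transverse measure $\nu_t$ by smoothing-then-restricting and hand it to the Harnack lemma, not to reconstruct $\nu$ from a transverse smoothing.
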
 
We will also need yet another function space
\eq{\label{DistributionW}
\W = \Set{f\in \Sw'(\R^d,\Co^l)}{ \pi_{\Omega(\xi/|\xi|)^{\perp}}[\hat{f}]\cdot H = 0}.
}
Here~$H$ is an auxiliary non-negative smooth function, positive outside the origin, and having deep zero  at the origin (this means~$H(x) = o(|x|^N)$ for any~$N$ in a neighborhood of the origin). Clearly, the definition does not depend on the particular choice of~$H$. The space~$\W$ is closed in~$\Sw'(\R^d,\Co^l)$, such spaces were firstly introduced in~\cite{AyoushWojciechowski2017}.

\begin{proof}[Derivation of Theorem~\ref{DimensionalEstimate} from Theorems~\ref{TangentMeasureTheorem} and~\ref{AnalyticTheorem}.]
Assume the contrary, let there exist a non-zero charge~$\mu \in \BV^\Omega$ such that~$\ldH\mu < k$. By Theorem~\ref{TangentMeasureTheorem}, there exists a measure~$\nu \in \Sw'(\R^d)$ such that~$a\otimes \nu \in \W$ for some non-zero~$a\in \Co^l$ (we use that~$\W$ is a closed dilation and translation invariant subspace of~$\Sw'(\R^d,\Co^l)$)  and~$\ldH\nu(0) < k$. By~\eqref{DistributionW}, this means
\eq{
\spec \nu \subset \set{\lambda \zeta}{a\in \Omega(\zeta), \quad \lambda > 0} \cup \{0\}
}
since
\eq{
\pi_{\Omega(\xi/|\xi|)^\perp} [a\otimes \hat{\nu}] = \pi_{\Omega(\xi/|\xi|)^\perp}[a]\otimes \hat{\nu}.
}
Since~$\nu$ is a measure (i.e. a non-negative real-valued distribution), its spectrum is symmetric. Thus,
\eq{
\set{\zeta \in S^{d-1}}{\exists \lambda > 0 \hbox{ such that }\lambda\zeta \in \spec \nu} \subset \Omega^{-1}(a)\cap (-\Omega^{-1}(a)).
}
By~\eqref{Antisymmetry}, there exists a~$k$-dimensional subspace~$L_a$ that does not intersect the closed set~$\Omega^{-1}(a)\cap (-\Omega^{-1}(a))$. In the case~$k=d$, this means~$\spec \nu \subset \{0\}$, i.e.~$\nu$ is a polynomial. This leads to contradiction with~$\ldH\nu(0) < d$.

In the case~$k < d$, a neighborhood of~$L_a\cap S^{d-1}$ does not intersect the set~\eqref{ProjectionOfSpectrum}. So,~$\nu$ falls under the scope of Theorem~\ref{AnalyticTheorem}, which implies~$\ldH\nu(0) \geq k$. We get a contradiction.
\end{proof}
\begin{Rem}
In fact\textup, the assertion of Theorem~\textup{\ref{DimensionalEstimate}} is true for the case where~$\mu$ is a charge of locally bounded variation in~$\W$. Moreover\textup, if the mapping~$\Omega$ is rational as a function of~$\zeta$\textup, then we may apply the same reasoning to the case where~$\mu$ is a charge of locally bounded variation satisfying the corresponding system of differential equations \textup(the assumption that~$\mu$ is tempered is not needed in the rational case\textup). 
\end{Rem}
The proof of Theorem~\ref{TangentMeasureTheorem} is presented in Section~\ref{S2} and the proof of Theorem~\ref{AnalyticTheorem} may be found in Section~\ref{S3} below.

I am grateful to Rami Ayoush, Fedor Nazarov, and Michal Wojciechowski for sharing their ideas with me. I also wish to thank Rami Ayoush and Anton Tselishchev for reading the text and exposition advice.

\section{Tangent measure considerations}\label{S2}
We will use the expression for the lower Hausdorff dimension of a measure in terms of the local lower Hausdorff dimensions (see Proposition~$10.2$ in~\cite{Falconer1997}):
\eq{\label{LocalGlobalDimension}
\dH \mu = \sup\set{s}{\hbox{ for } \mu \hbox{ almost all } x \quad \ldH \mu (x) \geq s}.
}
\begin{Rem}\label{LocDimAltDef}
It is convenient to describe the local dimension in a slightly different way\textup:~$\ldH\mu(x)$ is a number~$\alpha$ such that for any~$\eps > 0$ the inequality
\eq{
|\mu|(B_r(x)) \leq r^{\alpha -\eps}
}
holds true for all sufficiently small radii~$r > 0$\textup, however, there exists a sequence~$\{r_j\}_j$\textup,~$r_j \to 0$\textup, such that
\eq{
|\mu|(B_{r_j}(x)) \geq r_j^{\alpha + \eps}.
}
\end{Rem}
We start with a localized version of Theorem~\ref{TangentMeasureTheorem}.
\begin{St}\label{TangentMeasureLocal}
Let~$\mu$ be a locally finite measure on~$\R^d$\textup, let~$x\in \supp \mu$\textup, let also~$\beta > 0$ be some fixed number\textup, and let~$r_0 > 0$. Assume that for all~$r\leq r_0$ it is true that
\eq{\label{BetaAbove}
\mu(B_r(x)) \leq r^{\beta},
}
however\textup, for some~$\eps > 0$ there exists a sequence~$\{r_j\}_{j}$ tending to zero such that
\eq{
\mu(B_{r_j}(x)) \geq r_j^{\beta + \eps}.
}
If~$\eps$ is sufficiently small \textup(the smallness depends on~$\beta$ only\textup)\textup, there exists a non-zero~$\nu \in \Tan[\mu,x]$ such that
\begin{enumerate}[1\textup{)}]
\item $\ldH\nu(0) \leq \beta + \eps$\textup,
\item $\nu(B_R(0)) \leq R^{\beta+2\eps}$ for~$R > 1$\textup, in particular\textup,~$\nu \in \Sw'(\R^d)$\textup,
\item if~$\mu \in \Sw'(\R^d)$\textup, then~$\nu$ is tangent to~$\mu$ in the Schwartz sense as in~\textup{\eqref{SchwartzTangent}}.
\end{enumerate}
\end{St}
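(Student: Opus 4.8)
The plan is to produce the tangent measure $\nu$ as a limit of suitably rescaled and renormalized copies of $\mu$ around the point $x$, taken along a subsequence of the radii $r_j$ witnessing the lower bound on the mass. Without loss of generality take $x=0$. For each $j$ set $c_j = r_j^{-\beta-\eps}$ (roughly the reciprocal of the mass in $B_{r_j}$) and consider the rescaled measures $\nu_j = c_j T_{0,r_j}[\mu]$, i.e. the pushforward of $\mu$ under $y\mapsto y/r_j$ multiplied by $c_j$. By construction $\nu_j(B_1(0)) = c_j\,\mu(B_{r_j}(0)) \geq 1$, so the sequence does not degenerate to zero near the origin. The first key step is a uniform mass bound on large balls: for $R\geq 1$ and $r_j R \leq r_0$ we have, using \eqref{BetaAbove},
\eq{
\nu_j(B_R(0)) = c_j\,\mu(B_{r_j R}(0)) \leq r_j^{-\beta-\eps}(r_j R)^{\beta} = r_j^{-\eps}R^{\beta}.
}
This is not yet of the desired form $R^{\beta+2\eps}$, and fixing this is the crux of the argument: one cannot use an arbitrary sequence $r_j$, one must first \emph{select} a good scale. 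The standard device (this is where the smallness of $\eps$ relative to $\beta$ enters) is a pigeonhole/maximal-scale argument: among the dyadic scales between $r_j$ and $r_0$, choose the largest scale $\rho_j$ at which $\mu(B_{\rho_j}(0)) \geq \rho_j^{\beta+2\eps}$ fails to improve further, or equivalently iterate the doubling inequality to find a scale $\rho_j\in[r_j, r_j^{\theta}]$ with $\theta=\theta(\beta,\eps)<1$ such that $\mu(B_{\rho_j R}(0))\leq C R^{\beta+2\eps}\mu(B_{\rho_j}(0))$ for all $R$ in a long range $1\leq R\leq \rho_j^{-\delta}$. Rescaling at $\rho_j$ instead of $r_j$ then gives the clean bound in item 2) after passing to the limit, while the lower bound $\mu(B_{r_j}(0))\geq r_j^{\beta+\eps}$ together with $\rho_j\leq r_j^\theta$ still forces the limit to be nontrivial near $0$ and to satisfy $\ldH\nu(0)\leq\beta+\eps$ (item 1)).

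Once the uniform local mass bounds are in hand, the second step is a standard compactness argument: by the uniform bound on $\nu_j(B_R(0))$ for each fixed $R$, the sequence $\{\nu_j\}$ is precompact in the topology of weak-$*$ convergence on compact sets, so a subsequence converges to some measure $\nu$; by definition $\nu\in\Tan[\mu,0]$ after absorbing the normalizing constants into the tangent-measure equivalence. Lower semicontinuity of mass on open sets gives $\nu(B_1(0))\geq 1>0$ (nontriviality), and upper semicontinuity on the closed balls $B_R(0)$ propagates the bound $\nu(B_R(0))\leq R^{\beta+2\eps}$, which in particular shows $\nu$ is tempered and hence lies in $\Sw'(\R^d)$. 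The lower Hausdorff dimension statement $\ldH\nu(0)\leq\beta+\eps$ follows because the chosen renormalization forces $\nu(B_r(0))\gtrsim r^{\beta+\eps}$ along a sequence $r\to 0$ (the images of the radii $r_j/\rho_j$), which by Remark~\ref{LocDimAltDef} is exactly what is needed.

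For item 3), when $\mu\in\Sw'(\R^d)$ one must upgrade weak-$*$-on-compacts convergence to convergence in $\Sw'(\R^d)$. This is where the uniform polynomial bound $\nu_j(B_R(0))\leq R^{\beta+2\eps}$ over \emph{all} $R\geq 1$ (not just a bounded range) is essential: it gives equi-temperedness of the family $\{\nu_j\}$, so that for any Schwartz test function $\varphi$ the tails $\int_{|y|>R}\varphi\,d\nu_j$ are uniformly small, and one can combine this with convergence on the compact set $B_R(0)$ and let $R\to\infty$. I expect the main obstacle to be precisely the scale-selection step: arranging, via iteration of the doubling inequality \eqref{BetaAbove} and a careful bookkeeping of exponents, that a rescaling scale $\rho_j$ exists for which the mass-growth bound holds with exponent $\beta+2\eps$ over a range of $R$ that tends to infinity, while simultaneously keeping $\rho_j$ close enough to $r_j$ that the lower mass bound survives. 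The constraint that $\eps$ be small depending on $\beta$ is exactly the budget one needs so that the geometric loss in this iteration ($r_j^{-\eps}$ above) can be reabsorbed by the improved exponent; making this quantitative is the technical heart of Proposition~\ref{TangentMeasureLocal}.
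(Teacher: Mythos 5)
The overall shape of your argument — rescale $\mu$ around $x$ at carefully chosen scales, establish uniform polynomial mass bounds, extract a weak-$*$ limit, then upgrade to Schwartz convergence via equi-temperedness — matches the paper, and your treatment of items~2) and~3) is in the same spirit as Lemma~\ref{TangentMeasureExists} and Remark~\ref{DecrementRemark}. The uniform upper bound $\nu_j(B_R(0)) \leq R^{\beta+2\eps}$ comes from the monotonicity property encoded in the "decrement moments" of the auxiliary function $h^-(t)=\inf_{s\leq t}(-\log\mu(B_{e^{-s}}(x))-(\beta+2\eps)s)$, which is essentially the "choose the right scale" device you identify. So far so good.

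However, there is a genuine gap in your argument for item~1). You claim that $\ldH\nu(0)\leq\beta+\eps$ follows because "the chosen renormalization forces $\nu(B_r(0))\gtrsim r^{\beta+\eps}$ along a sequence $r\to 0$ (the images of the radii $r_j/\rho_j$)." But the ratios $s_j := r_j/\rho_j$ tend to $0$ (under your constraint $\rho_j\leq r_j^\theta$, $\theta<1$, they go to $0$ like $r_j^{1-\theta}$), so for each $j$ you have a lower mass bound for $\nu_j$ only at the \emph{single, $j$-dependent} scale $s_j\to 0$. Weak-$*$ convergence on compact sets does not let you pass such an estimate to the limit: the scale runs away to zero faster than the convergence can pin down $\nu$ near the origin. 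To conclude $\ldH\nu(0)\leq\beta+\eps$ one needs, per Remark~\ref{LocDimAltDef}, lower mass bounds for the \emph{fixed} limit measure $\nu$ at a sequence of fixed radii tending to $0$; this requires, for each fixed scale range, a lower bound on $\nu_j$ that holds uniformly for all large $j$, not merely a lower bound at one $j$-dependent scale.

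This is precisely what the paper's combinatorial argument supplies and what is missing from your sketch. The paper introduces the rapidly increasing sequence $c_m=K^{m^2}$, calls a rescaling moment $t$ "$m$-bad" if $h\geq h(t)$ on the whole interval $[t+c_m,\,t+c_{m+1}]$, shows via a measure-theoretic counting argument (the decrement set is large by~\eqref{DecrementLarge}, the $m$-bad set is small by~\eqref{LittleBadMoments}) that one can select a decrement moment $s_j$ which is $m$-good \emph{for every} $m\leq N_j$ simultaneously, and then for each $m$ passes the resulting lower bound $h(u_{j,m})\leq h(s_j)$, $u_{j,m}\in[s_j+c_m,s_j+c_{m+1}]$, to the limit to get $\nu(B_\rho(0))\geq\rho^{\beta+2\eps}$ for some $\rho\in[e^{-c_{m+1}},e^{-c_m}]$. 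Since this is available for all $m$, the dimension bound follows. Your "iterate the doubling inequality to find a scale $\rho_j$" step produces only one good scale per $j$, which does not suffice; you need many good scales per $j$, covering a range that grows with $j$ but is organized so that each fixed dyadic block is eventually covered for all large $j$. That is the technical heart you correctly anticipated but did not carry out, and without it the proof of item~1) does not go through.
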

\begin{Rem}
In particular\textup, our assumptions imply~$\ldH\mu (x) \in [\beta,\beta+\eps]$.
\end{Rem}
\begin{proof}[Proof of Theorem~\ref{TangentMeasureTheorem} assuming Proposition~\ref{TangentMeasureLocal}.]
Fix sufficiently small~$\eps > 0$. Let~$\alpha = \ldH\mu(x)$ for brevity. According to formula~\eqref{LocalGlobalDimension}, the set of points~$x$ such that
\eq{\label{DimensionInInterval}
\ldH\mu(x) \in [\alpha,\alpha + \eps],
} 
has positive measure~$|\mu|$.
By the Besicovitch differentiation theorem, for~$|\mu|$-almost every~$x\in\R^d$,~$\mu$ has density (which is the unit vector~$\frac{d\mu}{d|\mu|}\in\Co^l$) with respect to~$|\mu|$. Moreover, for~$|\mu|$ almost all~$x$,
\eq{\label{BesDifImpr}
\lim\limits_{\rho\to 0}\frac{1}{|\mu|(B_\rho(x))}\int\limits_{B_\rho(x)} \Big|\frac{d\mu}{d|\mu|}(y) - \frac{d\mu}{d|\mu|}(x)\Big|\,d|\mu|(y) = 0;
}
see Remark~$2.15 (3)$ in~\cite{Mattila1995}.

Let~$x\in \R^d$ be such that~\eqref{DimensionInInterval} and~\eqref{BesDifImpr} hold true. We apply Proposition~\ref{TangentMeasureLocal} to the measure~$|\mu|$ and the point~$x$ with~$\beta = \ldH\mu(x) \in [\alpha,\alpha + \eps]$ (or slightly smaller than~$\ldH\mu(x)$ to ensure~\eqref{BetaAbove}, see Remark~\ref{LocDimAltDef}). It remains to prove the implication 
\eq{\label{ConvMeasImplConvVar}
\frac{1}{|\mu|(B_{r_j}(x))} T_{x,r_j} [|\mu|] \stackrel{j\to \infty}{\longrightarrow} \nu \quad \Longrightarrow \frac{1}{|\mu|(B_{r_j}(x))} T_{x,r_j} [\mu] \stackrel{j\to \infty}{\longrightarrow} \frac{d\mu}{d|\mu|}(x) \otimes\nu
}
both in the classical sense (weak convergence of measures) and in the Schwartz sense.

Let us first prove~\eqref{ConvMeasImplConvVar} for the classical convergence. Pick an arbitrary continuous compactly supported function~$f$, let~$j$ tend to infinity, and assume the first convergence in~\eqref{ConvMeasImplConvVar} holds true. Then, by the Uniform Boundedness Principle,~$|\mu|(B_{Dr_j}(x)) \lesssim |\mu|(B_{r_j}(x))$ for any fixed~$D > 0$. In view of this,
\mlt{
\frac{1}{|\mu|(B_{r_j}(x))}\int\limits_{\R^d} f(z) dT_{x,r_j} [\mu](z) = \frac{1}{|\mu|(B_{r_j}(x))}\int\limits_{\R^d} f(r_j^{-1}(y-x))\,d\mu(y) =\\ \frac{1}{|\mu|(B_{r_j}(x))}\int\limits_{\R^d} f(r_j^{-1}(y-x))\frac{d\mu}{d|\mu|}(y)\,d|\mu|(y)\Eeqref{BesDifImpr} \frac{d\mu}{d|\mu|}(x)\frac{1}{|\mu|(B_{r_j}(x))}\int\limits_{\R^d} f(r_j^{-1}(y-x))\,d|\mu|(y) + o(1)=\\
\frac{d\mu}{d|\mu|}(x)\frac{1}{|\mu|(B_{r_j}(x))}\int\limits_{\R^d} f(z)\,dT_{x,r_{j}}[|\mu|](z) + o(1).
}
Therefore, the second convergence in~\eqref{ConvMeasImplConvVar} holds true. For the Schwartz case, we only need to recall that a sequence of measures~$\{\mathrm{m}_n\}$ converges in the Schwartz sense if and only if it converges in the classical sense, and is uniformly tempered (i.e. there is a uniform bound of the type~$|\mathrm{m}_n|(B_R) \lesssim R^N$,~$R > 1$, with~$N$ and the multiplicative constant independent of the sequence index), and observe that~$|\mu|$ dominates~$\mu$ in this sense.
\end{proof}
Consider two functions on the half line:
\alg{
f(t) = -\log \mu(B_{e^{-t}}(x));\\
h(t) = f(t) - (\beta + 2\eps) t,
}
here~$t \geq 0$ (without loss of generality, we may assume~$r_0 =1$ in Proposition~\ref{TangentMeasureLocal}).
The proof of Proposition~\ref{TangentMeasureLocal} is, in fact, a tedious analysis of these two functions of a single variable.  We encourage the reader to imagine the graphs of these functions, which makes the things more clear than the bulky formulas we present below. The assumptions of Proposition~\ref{TangentMeasureLocal} are restated as
\eq{\label{below}
f(t) \geq \beta t;\quad h(t) \geq -2\eps t,\qquad \hbox{for any}\ t > 0,
}
and there exists a sequence~$\{t_j\}_j$, $t_j \to \infty$, such that
\eq{\label{above}
f(t_j) \leq (\beta + \eps) t_j;\quad h(t_j) \leq -\eps t_j.
}
\begin{Rem}
The function~$f$ is non-decreasing and continuous from the left since we consider closed balls in its definition. Consequently\textup, the function~$h$ is also continuous from the left and satisfies the local Lipschitz bound
\eq{\label{LowerLipschitz}
\frac{h(t) - h(s)}{t-s} \geq -(\beta + 2\eps),\quad t \geq s.
}
\end{Rem}
Consider the function~$h^-$ given by the formula
\eq{
h^-(t) = \inf\limits_{0\leq s\leq t}h(s).
}
This function does not increase and satisfies the inequality~$h^-(t) \geq -2\eps t$ by~\eqref{below}. Clearly,~$h^- \leq h$.
\begin{Le}
The function~$h^-$ is continuous.
\end{Le}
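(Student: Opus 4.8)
The plan is to use the single structural fact that $h^-$ is, by construction, non-increasing (it is an infimum over a growing family), so it can fail to be continuous only by having a jump; it therefore suffices to verify left- and right-continuity separately at an arbitrary fixed $t>0$. I would use only two properties of $h$: that it is continuous from the left (stated in the Remark preceding the lemma) and that it obeys the one-sided Lipschitz bound \eqref{LowerLipschitz}, namely $h(t)-h(s)\geq-(\beta+2\eps)(t-s)$ for $t\geq s$. Note also that $h^-(t)\geq-2\eps t$ by \eqref{below} and $h^-(0)=h(0)$ is finite, so $h^-$ is real-valued everywhere.

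For left-continuity at $t$, monotonicity gives $h^-(t-):=\lim_{s\nearrow t}h^-(s)=\inf_{0\leq s<t}h^-(s)\geq h^-(t)$, so only the reverse inequality is at issue. Since $h^-\leq h$ pointwise, $h^-(t-)\leq\inf_{0\leq s<t}h(s)$; and for every $\delta>0$, left-continuity of $h$ produces $s_0<t$ with $h(s_0)<h(t)+\delta$, whence $h^-(t-)\leq h^-(s_0)\leq h(s_0)<h(t)+\delta$, so $h^-(t-)\leq h(t)$. Combining the two bounds, $h^-(t-)\leq\min\{\inf_{0\leq s<t}h(s),\,h(t)\}=h^-(t)$, as needed. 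This part uses no Lipschitz information.

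For right-continuity at $t$, monotonicity gives $h^-(t+)\leq h^-(t)$, so we must show $h^-(t+)\geq h^-(t)$. For $t'>t$ one has $h^-(t')=\min\{h^-(t),\,\inf_{t\leq s\leq t'}h(s)\}$, and \eqref{LowerLipschitz} gives $h(s)\geq h(t)-(\beta+2\eps)(s-t)$ for $s\in[t,t']$, hence $\inf_{t\leq s\leq t'}h(s)\geq h(t)-(\beta+2\eps)(t'-t)$. Therefore $h^-(t')\geq\min\{h^-(t),\,h(t)-(\beta+2\eps)(t'-t)\}$, and letting $t'\searrow t$ the right-hand side tends to $\min\{h^-(t),h(t)\}=h^-(t)$, so $h^-(t+)\geq h^-(t)$. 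This completes the proof.

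The only genuinely essential point — the ``hard part'', modest as it is — is right-continuity: since $h$ is assumed merely left-continuous, without the one-sided bound \eqref{LowerLipschitz} the function $h$ could drop off abruptly just to the right of $t$, and $h^-$ would inherit a downward jump there; it is precisely \eqref{LowerLipschitz} that rules this out. Everything else is routine bookkeeping with monotone functions.
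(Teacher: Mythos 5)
Your proof is correct, but it takes a mildly different route from the paper. The paper establishes the stronger quantitative fact that $h^-$ itself satisfies the one-sided Lipschitz estimate \eqref{Eq216}: assuming $h^-(t)<h^-(s)$ (else there is nothing to prove), the infimum defining $h^-(t)$ must be approached at points $\theta\in(s,t]$, and applying \eqref{LowerLipschitz} from $s$ to such $\theta$ yields $h^-(t)-h(s)\geq-(\beta+2\eps)(t-s)$, which is even stronger than \eqref{Eq216}; since a non-increasing function with a one-sided lower Lipschitz bound is automatically Lipschitz, continuity follows. You instead split continuity into its left and right halves, using left-continuity of $h$ for the former and \eqref{LowerLipschitz} for the latter. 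Both are valid, and your version makes transparent which hypothesis carries which half. Two small observations: first, the paper's approach produces a Lipschitz constant for $h^-$ as a byproduct, which is the more informative statement; second, the left-continuity of $h$ that you invoke is not independent input here --- \eqref{LowerLipschitz} already gives $h(s)\leq h(t)+(\beta+2\eps)(t-s)$ for $s<t$, hence $\limsup_{s\nearrow t}h(s)\leq h(t)$, which is all your left-continuity argument actually uses --- so one could run your proof with \eqref{LowerLipschitz} as the sole hypothesis, matching the paper's economy.
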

\begin{proof}
Since~$h^-$ does not increase, the desired continuity will follow from the inequality
\eq{\label{Eq216}
\frac{h^-(t) - h^-(s)}{t-s} \geq -(\beta + 2\eps),\quad t \geq s.
}
To show~\eqref{Eq216}, assume~$h^-(t) < h^-(s)$, otherwise there is nothing to prove. Under this assumption, we may prove a stronger bound
\eq{
\frac{h^{-}(t) - h(s)}{t-s} \geq -(\beta + 2\eps),\quad t \geq s.
}
The latter inequality follows from the definition of~$h^-$, our assumption~$h^-(t) < h^-(s)$, and~\eqref{LowerLipschitz}.
\end{proof}
We treat~$f$ and~$h$ as functions of time, thus calling the points on the half line moments.
\begin{Def}
A real~$t\in \R_+$ is called a constancy moment for~$h^-$ provided the latter function is constant on the interval~$(t_-,t_+)$ for some~$t_- < t < t_+$. The set of all constancy moments is called the constancy set. The remaining moments are decrement moments\textup, and the corresponding set is the decrement set.
\end{Def}
\begin{Rem}\label{IntervalsInConstancySet}
The constancy set is open and\textup, thus\textup, it is a union of finite or countable collection of disjoint open intervals.
\end{Rem}
\begin{Le}
If~$t$ is  a decrement moment\textup, then~$h^-(t) = h(t)$.
\end{Le}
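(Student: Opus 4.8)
The plan is to argue by contradiction: assuming $h^-(t) < h(t)$ (recall $h^- \le h$ always holds), I will show that $h^-$ is in fact constant on an open interval around $t$, so that $t$ is a constancy moment, contrary to hypothesis. The only ingredients I intend to use are that $h$ is continuous from the left (recorded in the Remark after~\eqref{LowerLipschitz}), that the one-sided limit $h(t+)$ exists and satisfies $h(t+)\ge h(t)$ (both because $f$ is non-decreasing), and that $h^-$ is non-increasing and continuous (the preceding Lemma). If $t=0$ there is nothing to prove, since $h^-(0)=h(0)$; so I assume $t>0$.

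First I would use the two one-sided limits of $h$ at $t$ to produce $\delta>0$ (with $\delta<t$) and a constant $c$ with $h^-(t)<c$ such that $h(s)>c$ for every $s\in(t-\delta,t+\delta]$ and also $h(t)>c$. Indeed, $\lim_{s\uparrow t}h(s)=h(t)>h^-(t)$ handles a left neighborhood of $t$, and $\lim_{s\downarrow t}h(s)=h(t+)\ge h(t)>h^-(t)$ handles a right neighborhood; one takes $c$ to be, say, the smaller of the two midpoints between $h^-(t)$ and the respective limit values, and $h(t)>c$ is then automatic. The point $t$ itself must be thrown in by hand, which is exactly why I record $h(t)>h^-(t)$ separately.

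Next I would show $h^-(t-\delta)=h^-(t)$. Since $h^-$ is non-increasing it suffices to exclude $h^-(t-\delta)>h^-(t)$. In that case, writing $h^-(t)=\min\big(h^-(t-\delta),\,\inf_{[t-\delta,t]}h\big)$ forces $h^-(t)=\inf_{[t-\delta,t]}h$; but $h(t-\delta)\ge h^-(t-\delta)>h^-(t)$ and $h(s)>c>h^-(t)$ for $s\in(t-\delta,t]$, so $\inf_{[t-\delta,t]}h>h^-(t)$, a contradiction. With this identity established, for every $s\in[t-\delta,t+\delta]$ I have $h^-(s)=\min\big(h^-(t-\delta),\,\inf_{[t-\delta,s]}h\big)=\min\big(h^-(t),\,\inf_{[t-\delta,s]}h\big)$; and since $h(t-\delta)\ge h^-(t-\delta)=h^-(t)$ and $h>c>h^-(t)$ on $(t-\delta,t+\delta]$, one gets $\inf_{[t-\delta,s]}h\ge h^-(t)$, whence $h^-(s)=h^-(t)$. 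Thus $h^-$ is constant on the open interval $(t-\delta,t+\delta)\ni t$, so $t$ is a constancy moment, contradicting the hypothesis. Therefore $h^-(t)\ge h(t)$, and together with $h^-\le h$ this gives $h^-(t)=h(t)$.

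This lemma is not deep; the only point I would be careful with is the one-sided bookkeeping. Left-continuity of $h$ controls it only on $(t-\delta,t)$, so $h(t)>h^-(t)$ must be inserted by hand; and since $h$ need not be right-continuous at $t$, I must use the genuine right limit $h(t+)$ (which exists and is $\ge h(t)$ by monotonicity of $f$) rather than the value $h(t)$ to control $h$ just to the right of $t$. Once these one-sided issues are handled, the remainder is a routine manipulation of infima of $h$ over nested subintervals and the monotonicity of $h^-$.
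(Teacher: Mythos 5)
Your proof is correct and its opening move is the same as the paper's: assume $h^-(t)<h(t)$ and use left continuity of $h$ to force $h^-$ to be constant on a left neighborhood of $t$. The closing step, however, is genuinely different. The paper stays in the framework of Lipschitz estimates: since $t$ is a decrement moment, $h^-$ decreases arbitrarily close to the right of $t$, which yields a sequence $t_j\searrow t$ with $h(t_j)<h^-(t)<h(t)$, and then~\eqref{LowerLipschitz} is violated in the limit. You instead control the right side of $t$ directly with the inequality $h(t+)\ge h(t)$ (which you correctly deduce from monotonicity of $f$; it is also an immediate consequence of~\eqref{LowerLipschitz}), and from there show that $h^-$ is constant on a full two-sided open interval around $t$, so $t$ would be a constancy moment. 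The two routes use the same underlying ingredient in slightly different packaging: you make explicit the fact that $h^-(t)<h(t)$ characterizes constancy moments, while the paper converts the decrement into a quantitative drop of $h$ that the Lipschitz bound forbids. Your version is a bit longer but arguably more transparent about what a decrement moment really is; the paper's is more compressed. Both are sound.
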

\begin{proof}
Assume the contrary, let~$t$ be a decrement moment such that~$h^-(t) < h(t)$. Since~$h$ is continuous from the left,~$h^-(t) < h(\theta)$ for all~$\theta$ in a left neighborhood of~$t$. Therefore,~$h^-$ is constant on the left of~$t$. Since we have assumed~$t$ is not a constancy moment,~$h^-(\theta) < h^-(t)$ for all~$\theta$ in a right neighborhood of~$t$. Therefore, there exists a sequence~$\{t_j\}_j$ tending to~$t$ from the right such that~$h(t_j) < h^-(t) < h(t)$, which contradicts~\eqref{LowerLipschitz}.
\end{proof}

At this point we are ready to make a simple but curious observation. In particular, Lemma~\ref{TangentMeasureExists} below implies that the tangent cone is non-empty for almost all points (the assumptions of the lemma below are fulfilled at almost every point if we set~$\beta \geq d$). See~\cite{Preiss1987} for the classical proof of that fact.
\begin{Le}\label{TangentMeasureExists}
Let~$\mu$ be a locally bounded measure in~$\R^d$\textup, let~$x\in \supp \mu$\textup, let~$\beta \geq 0$ be a fixed number. If there exists a sequence~$\{r_j\}_j$ tending to zero such that~$\mu(B_{r_j}(x)) \geq r_j^{\beta+\eps}$\textup, then~$\Tan[\mu,x] \ne \varnothing$.
\end{Le}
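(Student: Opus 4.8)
The plan is to construct a non-zero element of~$\Tan[\mu,x]$ by hand, the crux being a good choice of the dilation scales. Write~$\phi(r)=\mu(B_r(x))$; this function is non-decreasing, strictly positive (since~$x\in\supp\mu$), and finite on bounded sets (since~$\mu$ is locally finite, so in particular~$\phi(1)<\infty$), and the hypothesis reads~$\phi(r_j)\ge r_j^{\beta+\eps}$. Fix once and for all an exponent~$\gamma>\beta+\eps$.

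The main step is the selection of scales. For each~$j$ choose~$\rho_j\in[r_j,1]$ with
\[
\phi(\rho_j)\rho_j^{-\gamma}\ \ge\ \frac12\,\sup_{\rho\in[r_j,1]}\phi(\rho)\rho^{-\gamma};
\]
this is possible since the supremum is finite (it is at most~$\phi(1)r_j^{-\gamma}$). Two consequences follow. First,~$\rho_j\to0$: by this choice and the hypothesis,~$\phi(\rho_j)\rho_j^{-\gamma}\ge\frac12\phi(r_j)r_j^{-\gamma}\ge\frac12 r_j^{\beta+\eps-\gamma}\to\infty$, whereas~$\phi(\rho_j)\le\phi(1)<\infty$, so~$\rho_j^{-\gamma}\to\infty$. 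Second, the near-maximality turns into a scale-invariant polynomial bound: for~$1\le R\le1/\rho_j$ one has~$\phi(R\rho_j)(R\rho_j)^{-\gamma}\le2\phi(\rho_j)\rho_j^{-\gamma}$, that is,~$\phi(R\rho_j)\le2R^\gamma\phi(\rho_j)$; since~$\rho_j\to0$, for every fixed~$R\ge1$ this holds for all large~$j$, with a constant independent of~$j$.

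Passing to the limit is now routine. Set~$\nu_j=\phi(\rho_j)^{-1}\,T_{x,\rho_j}[\mu]$, so that~$\nu_j(B_1(0))=1$ and~$\nu_j(B_R(0))=\phi(R\rho_j)/\phi(\rho_j)\le2R^\gamma$ for all large~$j$ and every fixed~$R$. Hence~$\{\nu_j\}$ is locally uniformly bounded, so by weak-* compactness on each ball and a diagonal argument some subsequence~$\nu_{j_i}$ converges in the classical sense to a measure~$\nu$, which is locally finite since~$\nu(B_R(0))\le2R^\gamma$ for each~$R$. As~$B_1(0)$ is compact,~$\nu(B_1(0))\ge\limsup_i\nu_{j_i}(B_1(0))=1$ (the mass of a compact set is upper semicontinuous under classical convergence), so~$\nu\neq0$. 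Finally,~$\nu$ is by construction a limit of dilations~$c_i\,T_{x,\rho_{j_i}}[\mu]$ with~$c_i=\phi(\rho_{j_i})^{-1}>0$ and~$\rho_{j_i}\searrow0$, whence~$\nu\in\Tan[\mu,x]$ and~$\Tan[\mu,x]\neq\varnothing$.

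The only genuinely delicate ingredient is the scale selection. A single ``doubling constant'' valid at all dyadic scales above~$\rho_j$ is too much to ask for; what maximising~$\phi(\rho)\rho^{-\gamma}$ delivers instead is a polynomial profile bound~$\phi(R\rho_j)\lesssim R^\gamma\phi(\rho_j)$ that does not deteriorate as~$j\to\infty$, and that is exactly what is needed for the rescaled measures to have a non-zero locally finite limit. This is also the precise spot where the polynomial lower bound~$\phi(r_j)\ge r_j^{\beta+\eps}$ is used: it forces the maximisers~$\rho_j$ to tend to~$0$. Without such a lower bound the assertion is false in general --- there are locally finite measures with a point of the support at which the tangent cone is empty --- so the hypothesis cannot simply be dropped.
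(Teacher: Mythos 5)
Your proof is correct, and it reaches the conclusion by a somewhat more self-contained route than the paper. You select scales $\rho_j$ that nearly maximize $\phi(\rho)\rho^{-\gamma}$ on $[r_j,1]$; translating to the paper's variables ($t=-\log r$, $f(t)=-\log\phi(e^{-t})$, $h(t)=f(t)-\gamma t$ with $\gamma=\beta+2\eps$), this amounts to choosing moments $\tau_j=-\log\rho_j$ where $h$ is within an additive $\log 2$ of its infimum on $[0,t_j]$. The paper instead develops the auxiliary function $h^-(t)=\inf_{s\le t}h(s)$, proves it is continuous, introduces \emph{decrement moments} (points $\tau$ with $h^-(\tau)=h(\tau)$, i.e.\ record lows of $h$), shows the hypothesis forces such moments to accumulate at infinity, and dilates at those exact record lows, obtaining the clean bound $\rho^{\beta+2\eps}$ with no multiplicative $2$. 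Both choices serve the same purpose: at the selected scales the rescaled measures satisfy a polynomial volume bound that is uniform in $j$, which yields precompactness, and a normalization trick ($\nu_j(B_1(0))=1$ plus upper semicontinuity of mass on the compact unit ball) gives a non-zero limit. What your near-maximizer argument buys is economy: it sidesteps the definition and continuity of $h^-$, the classification of moments into constancy and decrement moments, and the existence lemma for decrement moments. What the paper's heavier machinery buys is reusability: the exact decrement moments and the constancy/decrement dichotomy are precisely what is needed in the subsequent, more delicate proof of Proposition~\ref{TangentMeasureLocal}, where the author must also ensure the chosen moments are ``$m$-good,'' i.e.\ that they witness $\ldH\nu(0)\le\beta+2\eps$. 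For the present lemma taken in isolation, your approach is arguably preferable; in the context of the section, the paper's setup amortizes its cost over the harder proposition.

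One small remark: your final paragraph says that without a polynomial lower bound the assertion fails because there are locally finite measures with a support point whose tangent cone is empty. This is true for \emph{charges} (Preiss has such examples), but for non-negative locally finite measures every point of the support has a non-empty tangent cone; the lemma's hypothesis removes the need for that classical argument rather than rescuing a false statement. The rest of the paragraph — that the lower bound is what forces $\rho_j\to 0$ — is exactly right.
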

\begin{proof}
Consider the functions~$f$,~$h$, and~$h^-$ contructed from~$\mu$ in the way described above. Note that in previous reasonings we have not used the assumption~\eqref{below} yet, so it is legal to use the results of these considerations in the proof. The assumption~\eqref{above} (which is present in the statement of the lemma), in particular, implies that the corresponding function~$h^-$ has arbitrarily large decrement moments. Let~$\{\tau_j\}_j$ be a sequence of decrement moments tending to infinity. We will show that the sequence
\eq{\label{SequenceOfMeasures}
\Big\{\frac{1}{\mu(B_{e^{-\tau_j}}(x))}T_{x,e^{-\tau_j}}[\mu]\Big\}_j
}
is pre-compact in the space of measures (in the classical sense). It suffices to establish the bounds
\eq{\label{QuantificationOfConv}
\frac{T_{x,e^{-\tau_j}}[\mu](B_{\rho}(0))}{\mu(B_{e^{-\tau_j}}(x))} \leq \rho^{\beta + 2\eps}, \quad \rho > 1,
}
provided~$j$ is sufficiently large (depending on~$\rho$). This inequality is equivalent to
\eq{
\mu(B_{e^{-\tau_j + \log\rho}}(x)) \leq \rho^{\beta + 2\eps}\mu(B_{e^{-\tau_j}}(x)),
}
which may be rewritten as
\eq{
-\log \mu (B_{e^{-\tau_j + \log\rho}(x)}) \geq -(\beta + 2\eps)\log \rho - \log \mu (B_{e^{-\tau_j}}(x)).
}
In terms of the function~$h$, this means
\eq{
h(\tau_j - \log \rho) \geq h(\tau_j), \quad \tau_j \geq \log \rho,
}
which is true:
\eq{
h(\tau_j -\log \rho) \geq h^-(\tau_j-\log \rho) \Gref{\scriptscriptstyle \rho > 1} h^{-}(\tau_j) = h(\tau_j)
}
since~$\tau_j$ is a decrement moment and~$\tau_j \geq \log \rho$.
\end{proof}
\begin{Rem}\label{DecrementRemark}
We have proved a stronger statement\textup: if~$\{\tau_j\}_j$ is a sequence of decrement moments\textup, then the sequence of measures~\textup{\eqref{SequenceOfMeasures}} is pre-compact. Any non-zero tangent measure~$\mathrm{m}$ constructed this way satisfies
\eq{
\mathrm{m} (B_R(0)) \leq R^{\beta + 2\eps},\quad R \geq 1.
}
In particular\textup,~$\mathrm{m} \in \Sw'(\R^d)$. Moreover\textup, if~$\mu$ is tempered\textup, then~$\mathrm{m}$ is tangent to~$\mu$ in the Schwartz sense since the sequence~\textup{\eqref{SequenceOfMeasures}} is uniformly tempered in this case. Indeed\textup, if~$\mu$ satisfies the bound
\eq{
\mu(B_R(0)) \lesssim R^N \mu(B_1(0))
}
for some~$N > \beta + 2\eps$\textup, then we may combine it with~\textup{\eqref{QuantificationOfConv}} for~$\tau_j = \log \rho$ to get
\eq{
\frac{T_{x,e^{-\tau_j}}[\mu](B_{R}(0))}{\mu(B_{e^{-\tau_j}}(x))} \lesssim e^{(\beta+2\eps)\tau_j}\Big(Re^{-\tau_j}\Big)^N \leq R^N,\quad R > e^{\tau_j}.
}
\end{Rem}

\begin{proof}[Proof of Proposition~\ref{TangentMeasureLocal}]
Let~$\{t_j\}_j$ be the sequence of moments from~\eqref{above}. Without loss of generality, we may assume~$t_{j+1} \geq 10 t_j$. Let us first prove that the decrement set is large in the sense that
\eq{\label{DecrementLarge}
\Big|\set{t\in [t_j,t_{j+1}]}{t\ \hbox{is a decrement moment}}\Big|\geq \frac{\eps t_{j+1}}{4(\beta + 2\eps)},
}
here the absolute value of a set denotes its Lebesgue measure. Let~$\{I_k\}_k$ be a finite covering by disjoint intervals of the part of the decrement set lying in~$[t_j,t_{j+1}]$. Let~$I_k = [a_k,b_k]$. In such a case, by~\eqref{below} and~\eqref{above},
\eq{
\sum\limits_k (h^-(a_k) - h^-(b_k)) \geq \eps t_{j+1} - 2\eps t_j
}
since~$h^-$ is piecewise constant outside~$\cup I_k$. Therefore, by~\eqref{LowerLipschitz},
\eq{
\sum\limits_k |a_k - b_k| \geq \frac{\eps t_{j+1} - 2\eps t_j}{\beta + 2\eps} \geq \frac{\eps t_{j+1}}{4(\beta + 2\eps)}
}
since we have assumed~$t_{j+1} \geq 10t_j$. Inequality~\eqref{DecrementLarge} is proved.

Consider now an auxiliary sequence~$\{c_m\}_m$, which is rapidly increasing (at least, we assume~$c_{m+1} \geq 2c_m$). The precise requirements will be given later. A moment~$t\in [t_j,t_{j+1}]$ is called~$m$-bad  provided~$h(s) \geq h(t)$ for any~$s\in [t+c_m,t + c_{m+1}]$. Let us also define yet another auxiliary non-decreasing sequence~$\{N_j\}_j$ of integers that tends to infinity very slowly (in fact, it will be stable most of the time), more specifically, we require that
\eq{\label{cSmallert}
c_{N_{j}+1} \leq \frac{t_j}{100}.
}
In particular, the choice of the~$N_j$ depends on the choice of the~$c_m$. Let us show that there are little~$m$-bad decrement moments with~$m \leq N_{j}$ in the interval~$[t_j,t_{j+1}]$. Namely, we want to prove the inequality
\eq{\label{LittleBadMoments}
\Big|\Set{t\in [t_j,t_{j+1}]}{t\ \hbox{is an $m$-bad decrement moment}}\Big| \leq \frac{3c_m}{c_{m+1}} t_{j+1}
}
under the assumption~$m \leq N_j$.

Here the proof of~\eqref{LittleBadMoments} is. If~$t\in [t_{j},t_{j+1}]$ is an~$m$-bad moment, then~$(t+c_{m},t+c_{m+1})\subset C$, where~$C$ is the set of all constancy moments. By Remark~\ref{IntervalsInConstancySet},~$C$ is a disjoint union of open intervals. Let~$I_1, I_2,\ldots, I_M$ be the intervals in the union that intersect~$[t_j+c_m,t_{j+1}+c_{m+1}]$ and whose lengths are at least~$c_{m+1} - c_m$. Clearly,
\eq{\label{QuantityOfIntervalsEstimate}
M \leq \frac{t_{j+1} - t_j}{c_{m+1}-c_m} + 3  \leq \frac{3t_{j+1}}{c_{m+1}}
}
by~\eqref{cSmallert} and the assumption that the sequences~$\{t_j\}$ and~$\{c_m\}$ are at least as fast as geometric sequences. We know that~$m$-bad \emph{decrement} moments lie inside the intervals of length~$c_m$ lying to the left of the~$I_k$ (because a decrement moment cannot lie inside any~$I_k$). In other words, if~$I_k = [a_k,b_k]$ for~$k=1,2,\ldots, M$, then
\eq{
\Set{t\in [t_j,t_{j+1}]}{t\ \hbox{is an $m$-bad decrement moment}} \subset \bigcup_{k=1}^M\,[a_k-c_m,a_k].
}
Therefore,
\eq{
\Big|\Set{t\in [t_j,t_{j+1}]}{t\ \hbox{is an $m$-bad decrement moment}}\Big| \leq M c_m \Leqref{QuantityOfIntervalsEstimate}\frac{3c_m t_{j+1}}{c_{m+1}}.
}
We have proved~\eqref{LittleBadMoments}. 

If we sum the inequalities~\eqref{LittleBadMoments} over all~$m=1,2,\ldots,N_j$, then we see that the set of all bad decrement moments (up to order~$N_j$) inside~$[t_j,t_{j+1}]$ is small:
\eq{
\Big|\Set{t\in [t_j,t_{j+1}]}{t\ \hbox{is an $m$-bad decrement moment for some~$m\in [1..N_j]$}}\Big| \leq 3t_{j+1}\sum\limits_{m=1}^{\infty}\frac{c_m}{c_{m+1}}.
}
If we set~$c_1 = 1$ and~$c_m = K^{m^2}$, we see, that the sum of the series on the right hand-side does not exceed~$\frac{3t_{j+1}}{K-1}$, which is smaller than the right hand-side of~\eqref{DecrementLarge}, provided~$K$ is sufficiently large. Fix such a~$K$, this defines the~$c_m$. Note that after we have fixed~$\{c_m\}_m$, we are free to choose~$\{N_j\}_j$ to be any slowly non-decreasing integer sequence that fulfills~\eqref{cSmallert}. All in all, we have proved that for any~$j$ there exists a decrement moment~$s_j \in [t_j,t_{j+1}]$ that is~$m$-good (i.e. not~$m$-bad) for any~$m \leq N_j$.

It remains to prove that~$\ldH \nu (0) \leq \beta +2\eps$ for any limit point~$\nu$ of the sequence
\eq{\label{SequenceOfDilations}
\Big\{\frac{1}{\mu(B_{e^{-s_j}}(x))}T_{x,e^{-s_j}}[\mu]\Big\}_j.
}
Such a point exists in view of Remark~\ref{DecrementRemark}, . Without loss of generality, we may assume~$\nu$ is the limit of the sequence~\eqref{SequenceOfDilations}. We will show that for any~$m\in \N$ there exists~$\rho \in [e^{-c_m},e^{-c_{m+1}}]$ such that
\eq{\label{LocalDimensionEstimate}
\nu(B_\rho(0)) \geq \rho^{\beta + 2\eps}.
}
This clearly implies~$\ldH \nu (0) \leq \beta +2\eps$. Fix~$m$. Since~$s_j$ is~$m$-good for any sufficiently large~$j$ (when~$N_j > m$), there exists a moment~$u_{j,m}\in [s_j + c_m,s_j + c_{m+1}]$ such that
\eq{\label{GoodnessSake!}
h(u_{j,m}) \leq h(s_j).
} 
Let~$-\log\rho$ be a limit point of the sequence~$\{u_{j,m} - s_j\}_{j}$. Then,~$-\log \rho \in [c_m,c_{m+1}]$. Let us rewrite~\eqref{GoodnessSake!} using the definition of~$h$:
\eq{
-\log \mu (B_{e^{-u_{j,m}}}(x)) - (\beta + 2\eps)u_{j,m} \leq -\log \mu(B_{e^{-s_j}}(x)) - (\beta + 2\eps)s_j,
}
which is equivalent to
\eq{
(\beta+2\eps)(s_j - u_{j,m}) \leq \log\frac{\mu (B_{e^{-u_{j,m}}}(x))}{\mu(B_{e^{-s_j}}(x))}.
}
Passing to the limit (maybe, we need to enlarge the ball~$B_{\rho}(0)$ slightly), we get~\eqref{LocalDimensionEstimate}.
\end{proof}
\section{Considerations with wavefront theory flavor}\label{S3}
We will start our considerations with a very simple form of Harnack's inequality. Let~$\Phi$ be a continuous non-negative bounded function in~$d$ variables such that 
\eq{\label{DecreasingKernel}
\Phi(sx) \leq \Phi(x),\quad \hbox{for any}\ s \geq 1, x\in \R^d.
}
In other words,~$\Phi$ is radially non-increasing (note that we do not assume the radial symmetry). Let us assume also~$\int_{\R^d}\Phi = 1$ and construct the approximate identity
\eq{
\Phi_t(x) = t^{-d}\Phi(t^{-1}x),\quad t > 0,
}
and the corresponding extension operator:
\eq{
\Ext_\Phi[\mu](x,t) = \mu*\Phi_t(x).
}
This formula makes sense for measures that do not have much mass at infinity. Recall that measures are always non-negative in our considerations. 
\begin{Le}\label{BCTArbitraryKernel}
Let~$\mu$ be a finite measure. Then\textup,
\eq{
\Ext_\Phi[\mu](0,t) \leq t^{-d}\Ext_\Phi[\mu](0,1),\quad t\in (0,1].
}
\end{Le}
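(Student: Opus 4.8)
The plan is short, because Lemma~\ref{BCTArbitraryKernel} is essentially a pointwise consequence of the monotonicity hypothesis~\eqref{DecreasingKernel}. First I would unwind the definitions. Since $\mu$ is finite and $\Phi$ is bounded and continuous, the integral
\eq{
\Ext_\Phi[\mu](0,t) = (\mu*\Phi_t)(0) = \int\limits_{\R^d}\Phi_t(-y)\,d\mu(y) = t^{-d}\int\limits_{\R^d}\Phi\Big(-\frac{y}{t}\Big)\,d\mu(y)
}
converges absolutely for every $t>0$, so everything below is legitimate.

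The key step is to compare the integrands for the parameters $t$ and $1$. Fix $t\in(0,1]$ and set $s=t^{-1}\geq 1$. Applying~\eqref{DecreasingKernel} with this $s$ and with the point $x=-y$ gives $\Phi(-y/t)=\Phi\big(s\cdot(-y)\big)\leq\Phi(-y)$ for every $y\in\R^d$. Since $\mu$ is a non-negative measure, integrating this pointwise inequality against $d\mu$ and multiplying by $t^{-d}>0$ yields
\eq{
\Ext_\Phi[\mu](0,t) = t^{-d}\int\limits_{\R^d}\Phi\Big(-\frac{y}{t}\Big)\,d\mu(y) \leq t^{-d}\int\limits_{\R^d}\Phi(-y)\,d\mu(y) = t^{-d}\Ext_\Phi[\mu](0,1),
}
which is exactly the asserted inequality.

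The only thing one has to be careful about is the direction of the scaling: the restriction $t\leq 1$ is precisely what makes $s=1/t\geq 1$, so that~\eqref{DecreasingKernel} applies in the useful direction (for $t>1$ the inequality would reverse). No measurability or integrability subtlety arises, as $\Phi$ is continuous and bounded and $\mu$ has finite total mass; and one may note in passing that neither the normalization $\int_{\R^d}\Phi=1$ nor the approximate-identity structure is actually needed for this particular estimate — only the radial monotonicity matters here. So there is no real obstacle; the content of the lemma lies entirely in recognizing the dilation hypothesis as a one-line Harnack-type comparison.
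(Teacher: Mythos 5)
Your proof is correct and is essentially identical to the paper's: both apply the radial monotonicity hypothesis~\eqref{DecreasingKernel} with $s=t^{-1}\geq 1$ pointwise under the integral sign and then integrate against the non-negative measure~$\mu$. Your remark that the normalization $\int\Phi=1$ is unused here is accurate but cosmetic; there is nothing to add.
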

\begin{proof}
In fact, there is nothing to prove:
\eq{
\Ext_\Phi[\mu](0,t) = t^{-d}\int\limits_{\R^d}\Phi(-t^{-1}x)\,d\mu(x)\Leqref{DecreasingKernel} t^{-d}\int\limits_{\R^d}\Phi(-x)\,d\mu(x) = t^{-d}\Ext_\Phi[\mu](0,1). 
}
\end{proof}
See~\cite{Stolyarov2020HLS} for less trivial variants of Lemma~\ref{BCTArbitraryKernel}.
\begin{proof}[Proof of Theorem~\ref{AnalyticTheorem}]
Without loss of generality, we may assume 
\eq{
L = \set{\xi\in \R^d}{\xi_{k+1} = \xi_{k+2}=\ldots=\xi_{d} = 0}
}
and that~$\nu$ has spectrum inside the set
\eq{\label{SpectrumRestriction}
\Set{\xi\in \R^d}{\sum\limits_{j=k+1}^d\xi_j^2 \geq \eps \sum\limits_{j=1}^k \xi_j^2},
}
where~$\eps$ is a sufficiently small number. Let~$\Psi\in\Sw(\R^d)$ be a positive function with compact spectrum. Consider the measure~$\tilde{\nu} = \Psi\nu$. This measure decays at infinity:
\eq{\label{DecaysAtInfinity}
\forall N \in \N \quad \tilde{\nu}(\R^d\setminus B_R(0)) \lesssim R^{-N},\quad  R > 1,
} 
and clearly,
\eq{
\ldH \nu(0) = \ldH\tilde{\nu}(0).
}
The cost of passing from~$\nu$ to~$\tilde{\nu}$ is that~$\spec \tilde{\nu} \subset \spec \nu + \spec \Phi$
does not necessarily lie inside the set~\eqref{SpectrumRestriction}. However, it lies inside the set
\eq{\label{NewSpectrumRestriction}
\Set{\xi\in \R^d}{\sum\limits_{j=k+1}^d\xi_j^2 \geq \frac{\eps}{2} \sum\limits_{j=1}^k \xi_j^2} \cup \set{\xi \in \R^d}{|\xi| \leq R},
}
where~$R$ is sufficiently large. Let us rename~$\tilde{\nu}$ back to~$\nu$. From now on~$\nu$ is a measure that satisfies~\eqref{DecaysAtInfinity} and  has spectrum inside the set~\eqref{NewSpectrumRestriction}. Note that~$\hat{\nu}$ is a smooth function. Consider the heat extension of~$\nu$:
\eq{
\Heat[\nu](x,t) = (4\pi t)^{-\frac{d}{2}} \int\limits_{\R^d}e^{-\frac{|x-y|^2}{4t}}\,d\nu(y);\quad \Heat[\nu](\fdot,t) = \Big(e^{-4\pi^2 t |\xi|^2}\hat{\nu}(\xi)\Big)\check{\phantom{I}}.
}
It suffices to prove the estimate
\eq{
\Heat[\nu](0,t) \lesssim t^{\frac{k-d}{2}}
}
since~$\nu(B_r(x)) \lesssim r^{d}\Heat[\nu](x,r^2)$ when~$r \leq 1$. Consider the distribution~$\nu_t$ defined on~$\R^{d-k}$ by the formula
\eq{
\hat{\nu}_t(\xi_{k+1},\xi_{k+2},\ldots,\xi_d) = \int\limits_{\R^{k}} \hat{\nu}(\xi) e^{-4\pi^2 t\sum_{j=1}^k\xi_j^2}\,d\xi_{1}d\xi_{2}\ldots d\xi_k,
}
which is at least well defined by~\eqref{NewSpectrumRestriction} (we integrate over a bounded region). Using standard distribution theory, one may verify~$\nu_t$ is a non-negative distribution. Thus, by Schwartz's theorem, it is a tempered measure. Since~$\hat{\nu}_t$ is continuous at the origin,~$\nu_t$ is a finite measure. It remains to write a chain of inequalities:
\mlt{
\Heat[\nu](0,t) = \int\limits_{\R^{d-k}} \hat{\nu}_t\big(\xi_{k+1},\xi_{k+2},\ldots,\xi_{d}\big)e^{-4\pi^2 t\sum_{k+1}^{d}\xi_j^2}\,d\xi_{k+1}d\xi_{k+2}\ldots d\xi_d = \Heat[\nu_t](0,t) \Lref{\hbox{\tiny Lem. \ref{BCTArbitraryKernel}}}\\ t^{\frac{k-d}{2}}\Heat[\nu_t](0,1) =t^{\frac{k-d}{2}}\int\limits_{\R^d} e^{-4\pi^2 (t\sum_{1}^k\xi_j^2 + \sum_{k+1}^d\xi_j^2)}\hat{\nu}(\xi)\,d\xi \leq t^{\frac{k-d}{2}}\nu(\R^d)\int\limits_{\scriptscriptstyle\eqref{NewSpectrumRestriction}} e^{-4\pi^2 \sum_{k+1}^d\xi_j^2}\,d\xi,
} 
which is finite.

\end{proof}
\bibliography{mybib}{}
\bibliographystyle{amsplain}

St. Petersburg State University, Department of Mathematics and Computer Science;

St. Petersburg Department of Steklov Mathematical Institute;

d.m.stolyarov at spbu dot ru.
\end{document}